\begin{document}
\newtheorem*{theorem*}{Theorem}
\newtheorem{theorem}{Theorem}
\newtheorem{lemma}[theorem]{Lemma}
\newtheorem{algol}{Algorithm}
\newtheorem{cor}[theorem]{Corollary}
\newtheorem{prop}[theorem]{Proposition}

\newtheorem{proposition}[theorem]{Proposition}
\newtheorem{corollary}[theorem]{Corollary}
\newtheorem*{conjecture*}{Conjecture}
\newtheorem{conjecture}[theorem]{Conjecture}
\newtheorem{definition}[theorem]{Definition}
\newtheorem{remark}[theorem]{Remark}
\renewcommand{\thetheorem}{\empty{}} 
 \numberwithin{equation}{section}
  \numberwithin{theorem}{section}

\newcommand{\comm}[1]{\marginpar{%
\vskip-\baselineskip 
\raggedright\footnotesize
\itshape\hrule\smallskip#1\par\smallskip\hrule}}

\def\sssum{\mathop{\sum\!\sum\!\sum}}
\def\ssum{\mathop{\sum\ldots \sum}}
\def\iint{\mathop{\int\ldots \int}}
\newcommand{\twolinesum}[2]{\sum_{\substack{{\scriptstyle #1}\\
{\scriptstyle #2}}}}

\def\cA{{\mathcal A}}
\def\cB{{\mathcal B}}
\def\cC{{\mathcal C}}
\def\cD{{\mathcal D}}
\def\cE{{\mathcal E}}
\def\cF{{\mathcal F}}
\def\cG{{\mathcal G}}
\def\cH{{\mathcal H}}
\def\cI{{\mathcal I}}
\def\cJ{{\mathcal J}}
\def\cK{{\mathcal K}}
\def\cL{{\mathcal L}}
\def\cM{{\mathcal M}}
\def\cN{{\mathcal N}}
\def\cO{{\mathcal O}}
\def\cP{{\mathcal P}}
\def\cQ{{\mathcal Q}}
\def\cR{{\mathcal R}}
\def\cS{{\mathcal S}}
\def\cT{{\mathcal T}}
\def\cU{{\mathcal U}}
\def\cV{{\mathcal V}}
\def\cW{{\mathcal W}}
\def\cX{{\mathcal X}}
\def\cY{{\mathcal Y}}
\def\cZ{{\mathcal Z}}

\def\C{\mathbb{C}}
\def\F{\mathbb{F}}
\def\K{\mathbb{K}}
\def\Z{\mathbb{Z}}
\def\R{\mathbb{R}}
\def\Q{\mathbb{Q}}
\def\N{\mathbb{N}}
\def\M{\textsf{M}}

\def\({\left(}
\def\){\right)}
\def\[{\left[}
\def\]{\right]}
\def\<{\langle}
\def\>{\rangle}

\title[The maximum size of short character sums.]
{\bf The maximum size of short character sums.}

\author{Marc Munsch}
\address{5010 Institut f\"{u}r  Analysis und Zahlentheorie
8010 Graz, Steyrergasse 30, Graz}
\email{munsch@math.tugraz.at}

\date{\today}

\subjclass[2010]{11L40, 11N25}
\keywords{Dirichlet characters, large values, friable numbers, multiplicative functions.}

\begin{abstract}
 In the present note, we prove new lower bounds on large values of character sums $\Delta(x,q):=\max_{\chi \neq \chi_0} \vert \sum_{n\leq x} \chi(n)\vert$ in certain ranges of $x$. Employing an implementation of the resonance method developed in a work involving the author in order to exhibit large values of $L$- functions, we improve some results of Hough in the range $\log x = o(\sqrt{\log q})$. Our results are expressed using the counting function of $y$- friable integers less than $x$ where we improve the level of smoothness $y$ for short intervals.
\end{abstract}

\bibliographystyle{plain}
\maketitle

\section{Introduction}

 The behavior of character sums $S_{\chi}(x):=\sum_{n\leq x}\chi(n)$ where $\chi$ is a non-principal Dirichlet character modulo $q$ is of great importance in many number theoretical problems such as the distribution of non-quadratic residues or primitive roots. Showing some cancellation in such character sums has been an intensive topic of study for many decades, originating from the unconditional bound of P\'{o}lya and Vinogradov $S(\chi) \ll \sqrt{q}\log q$. In the present note, we are interested in the opposite problem which consists of bounding from below the quantity 
\begin{equation}\label{max} \Delta(x,q):= \max_{\chi \neq \chi_0}\left\vert \sum_{n \leq x} \chi(n)\right\vert.\end{equation} Here and throughout this paper we write $\log_j$ for the $j$-th iterated logarithm,  for example $\log_2 q = \log \log q$. Assuming the Generalized Riemann Hypothesis, Montgomery and Vaughan showed $\Delta(x,q) \ll \sqrt{q} \log_2 q$ (strenghtening the work of P\'{o}lya and Vinogradov) which matches the omega results obtained earlier by Paley \cite{Paley}. Nonetheless, the situation for shorter intervals remains in certain cases open. An intensive study of this quantity through the computation of high moments was carried out by Granville and Soundararajan \cite{largeGS} giving very precise results for short intervals. Few years later, Soundararajan developed the so-called resonance method \cite{sound} in order to show the existence of large values of $L$- functions at the central point. In the intermediate range $\sqrt{\log q}<\log x<(1-\epsilon) \log q$, even though the situation remains widely open, recent progress have been made using this method (see the results obtained by Hough \cite{Hough} reinforcing the previous results of Granville and Soundararajan \cite{largeGS}). It is worth noticing that de la Bret\`{e}che and Tenenbaum \cite{Gal} recently obtained the following result improving earlier bounds of Hough for very large $x$, precisely as soon as $\log x \geq (\log q)^{1/2+\epsilon}$,
  $$\Delta(x,q) \geq \sqrt{x} \exp\left((\sqrt{2}+o(1))\sqrt{\frac{\log (q/x) \log_3 (q/x)}{\log_2 (q/x)}} \right).$$ \\
  In this paper, we will give new bounds in the range $\log x < \sqrt{\log q}$. 
   Originating from ideas going back to Montgomery and Vaughan \cite{expmulti}, we expect, in that case, the behavior of the character sum to be closely linked to the behavior of the character sum over friable numbers. In particular, it emphases the fact that character sums can only be large because of a special bias for small primes. This fruitful idea can be traced back to Littlewood \cite{Littlewood} which showed the existence of large real character sums by prescribing the values of the first $\log q$ primes. \\ 
 
 In order to state our results, let us define by $\mathcal{S}(x,y)$ the set of $y$- friable numbers less than $x$ and denote by $\Psi(x,y)$ the cardinal of this set. More generally, for any arithmetic function $f$, we write

$$\Psi(x,y;f)=\sum_{n\in \mathcal{S}(x,y)}f(n).$$  Granville and Soundararajan \cite{largeGS} made the following conjecture

\begin{conjecture*} There exists a constant $A>0$ such that for any non-principal character $\chi (\bmod q)$ and for any $1\leq x\leq q$ we have, uniformly,

$$\sum_{n\leq x} \chi(n) = \Psi(x,y;\chi) + o(\Psi(x,y;\chi_0)),$$ where $y=(\log q + \log^2 x)(\log \log q)^A$.
 \end{conjecture*}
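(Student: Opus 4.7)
The natural plan is to split the sum at smoothness level $y$ and control the contribution of non-smooth integers. Writing $n = mp$ with $p = P^+(n)$, one has
$$\sum_{n \leq x} \chi(n) - \Psi(x,y;\chi) \;=\; \sum_{\substack{n \leq x \\ P^+(n) > y}} \chi(n) \;=\; \sum_{y < p \leq x} \chi(p)\, \Psi\!\left(\frac{x}{p},\,p\,;\,\chi\right),$$
so the task reduces to showing this double sum is $o(\Psi(x,y;\chi_0))$ uniformly in $x \in [1,q]$ and in $\chi \ne \chi_0$. The benchmark $\Psi(x,y;\chi_0)$ is well understood through the Hildebrand--Tenenbaum saddle point formula in terms of the Dickman function $\rho(u)$ with $u = \log x / \log y$, so all the difficulty lies in extracting cancellation from the left-hand double sum.

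Two complementary regimes should be treated separately. When $p$ is moderate, say $y < p \leq x^{1-\delta}$, the inner sum over $m \in \cS(x/p,\,p)$ is long and smooth; one would estimate it by Perron inversion of its Dirichlet series and then compare $L(s,\chi)$ to a best-fitting low-conductor $L$-function via the pretentious framework of Granville--Soundararajan, using the fact that $\chi$ can only have large partial sums by pretending to be such a character. When $p$ is close to $x$ the inner sum is short, but the outer sum $\sum_p \chi(p) \Psi(x/p,p;\chi)$ becomes essentially a character sum over primes, which Vaughan's identity converts into type~I and type~II bilinear sums amenable to Vinogradov's method. The calibration $y = (\log q + \log^2 x)(\log_2 q)^A$ is exactly what is needed so that the $(\log_2 q)^A$ factor absorbs the losses that appear in matching the two regimes; tracking $\rho(u)$ through the comparison pins down the admissible value of $A$.

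The main obstacle, and the reason this remains a conjecture, is the range $\log x \ll \sqrt{\log q}$ that motivates the whole paper. Here $x$ is far below the Burgess threshold $q^{1/4+\epsilon}$, so no nontrivial pointwise bound is available for the inner friable character sums, and the pretentious framework must carry the entire argument by itself. Quantifying the distance $\mathbb{D}(\chi,\chi';x)$ sharply enough to beat the smooth benchmark $\Psi(x,y;\chi_0)$ down to the stated level of $y$ appears to need input beyond the current Halász--Montgomery toolbox: this is presumably why the present paper concentrates on producing unconditional lower bounds via the resonance method rather than attacking the conjecture head on, and it is consistent with the results of de la Bret\`eche--Tenenbaum being restricted to $\log x \geq (\log q)^{1/2+\epsilon}$, which is precisely the regime where Burgess becomes available again.
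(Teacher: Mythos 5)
The statement you were asked about is a \emph{conjecture} of Granville and Soundararajan; the paper merely records it as motivation and offers no proof, so there is no ``paper's own proof'' to compare against. You recognized this, and the bulk of your text is appropriately a discussion of plausibility and obstacles rather than a purported proof. Your Buchstab-style decomposition $\sum_{n\le x}\chi(n)-\Psi(x,y;\chi)=\sum_{y<p\le x}\chi(p)\Psi(x/p,p;\chi)$ is correct, and the two-regime strategy (pretentiousness for moderate $p$, Vaughan/Vinogradov for $p$ near $x$) together with the honest verdict that the range $\log x\ll\sqrt{\log q}$ lies beyond current technology is a fair summary of the state of the art. One thing you did not mention that the paper emphasizes: the conjecture cannot hold with any $A<1$, because Hough's lower bound $\Delta(x,q)\ge\Psi(x,\log q\,\log_2^{1-o(1)}q)$ already forces $A\ge1$; this is the paper's main reason for stating the conjecture at all, since the present results push closer to that barrier from below.

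However, your last clause contains a genuine factual error. You assert that the de la Bret\`eche--Tenenbaum results being restricted to $\log x\ge(\log q)^{1/2+\epsilon}$ corresponds to ``precisely the regime where Burgess becomes available again.'' It does not: Burgess requires $x>q^{1/4+\epsilon}$, i.e.\ $\log x\gg\log q$, whereas $\log x=(\log q)^{0.6}$, say, satisfies the de la Bret\`eche--Tenenbaum hypothesis while being astronomically smaller than any fixed power of $q$. Moreover their work (and Hough's, and this paper's) concerns \emph{lower} bounds for $\Delta(x,q)$ obtained by resonance/G\'al-sum arguments, not upper bounds of the kind the conjecture would deliver, so Burgess-type input is not what constrains their range. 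The real reason their method requires $\log x\ge(\log q)^{1/2+\epsilon}$ is a limitation of the G\'al-sum optimization, not the availability of pointwise character-sum estimates.
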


This would directly imply the upper bound 
 
 $$\Delta(x,q) \ll \Psi(x,(\log q + \log^2 x)\log_2^A q).$$ 
  On the other hand, this can hold only with $A\geq 1$. Indeed, Hough using the resonance method proved the following estimate in the transition range $\log x=(\log q)^{1/2-\epsilon}$, 
 
 \begin{equation}\label{Hough} \Delta(x,q) \geq \Psi(x,\log q \log_2^{1-o(1)} q).\end{equation}

 In this article, we prove that such a lower bound can be obtained relatively easily in some ranges of $x$.  Our argument relies on the recent variant of the long resonance method introduced in \cite{AMM} and \cite{AMMP}. In few words, the method consists of bounding from below the following quotient
\begin{equation}\label{quotient}
\frac{\left|\sum_{\chi \bmod q} S_{\chi}(x) |R(\chi)^2| \right|}{\sum_{\chi \bmod q} |R(\chi)|^2},
\end{equation}
where $R(\chi)$ is a well-chosen ``resonator''. As explained in \cite{AMM} and \cite{AMMP}, it is possible to define the function $R(\chi)$ as a truncated Euler product of size $\approx \log q\log_2 q$. The main advantage of this method is the complete multiplicative structure of the resonator defined as a short Euler product leading naturally to sums over friable integers. A well suited choice of the weights for every prime allows us to relate the lower bound of the quotient (\ref{quotient}) to the counting function of friable integers with some relatively large level of smoothness. \\
 
 Precisely, we prove

\begin{theorem}\label{lowfriable} For $q$ a sufficiently large prime, under the condition $\log q< x\leq \exp(\sqrt{\log q})$, we have

$$ \Delta(x,q)=\max_{\chi \neq \chi_0} \vert S_{\chi}(x) \vert \geq \Psi \left(x,\left(\frac{1}{4}+o(1)\right)\frac{(\log q)(\log_2 q)}{\max\{(\log_2 x -\log_3 q),\log_3 q\}}\right).   $$

   \end{theorem}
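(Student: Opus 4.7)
My plan is to adapt the long-resonator construction from \cite{AMM} and \cite{AMMP} to the character-sum setting. Fix parameters $y$ (close to the friability level stated in Theorem~\ref{lowfriable}) and $N$ with $Nx<q$, and build a resonator as a Dirichlet polynomial supported on $y$-friable integers,
\[
R(\chi) := \sum_{n \in \cS(N, y)} r(n)\,\chi(n),
\]
where $r$ is a nonnegative completely multiplicative weight to be optimized. The starting point is the inequality
\[
\Delta(x, q) \geq \frac{\left|\sum_{\chi \bmod q} S_\chi(x) |R(\chi)|^2 - S_{\chi_0}(x)|R(\chi_0)|^2\right|}{\sum_{\chi \bmod q} |R(\chi)|^2 - |R(\chi_0)|^2}.
\]

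Expanding $|R(\chi)|^2$ and applying the orthogonality of Dirichlet characters modulo $q$, I would reduce both the numerator and denominator to counting problems in $\Z$. Because $Nx < q$ and $q$ is prime, a congruence $km \equiv n \pmod q$ with $k\leq x$ and $m,n\in \cS(N,y)$ forces the equality $km = n$ in $\Z$, and similarly $m \equiv n \pmod q$ with $m,n \in \cS(N, y)$ forces $m = n$. Combining this with the complete multiplicativity of $r$ and the fact that divisors of $y$-friables are themselves $y$-friable, the main term of the numerator factors as
\[
(q-1)\sum_{k \in \cS(x, y)} r(k) \sum_{m \in \cS(N/k, y)} r(m)^2,
\]
while the denominator equals $(q-1)\sum_{m \in \cS(N,y)} r(m)^2$.

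The crux is then to establish, uniformly in $k \leq x$, the asymptotic $\sum_{m \in \cS(N/k, y)} r(m)^2 = (1 + o(1)) \sum_{m \in \cS(N, y)} r(m)^2$. Under this uniformity the ratio reduces to $\sum_{k \in \cS(x, y)} r(k)$, and the trivial choice $r \equiv 1$ already produces the bound $\Psi(x, y)$. The uniformity condition translates, via Hildebrand's asymptotic $\Psi(t, y) \sim t \rho(\log t/\log y)$ and the refined expansion $\log \rho(u) = -u(\log u + \log_2 u - 1 + o(1))$ for the Dickman function, into a relation between $\log y$ and $u = \log N/\log y$. Pushing $y$ as large as possible subject to $Nx < q$ yields $y = (1/4 + o(1))\frac{(\log q)(\log_2 q)}{\max\{\log_2 x - \log_3 q,\ \log_3 q\}}$, the maximum in the denominator arising from the bifurcation between the regime where $\log_2 x$ exceeds $\log_3 q$ and the opposite one in which the friability parameter alone dictates the constraint. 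The principal character contribution $S_{\chi_0}(x)|R(\chi_0)|^2 \ll x \Psi(N, y)^2$ is negligible after dividing by $(q-1)\Psi(N, y)$, since $Nx < q$ and $\Psi(x, y)$ is not too small in the range $x > \log q$.

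The main obstacle is the extraction of the explicit constant $1/4$ and of the precise form of the denominator $\max\{\log_2 x - \log_3 q,\ \log_3 q\}$. This requires a second-order expansion of $\log \rho$ in the regime where $u = \log N/\log y$ is large, together with uniform estimates on $\Psi(N/k, y)/\Psi(N, y)$ for $k \leq x$ of a quality beyond the leading-order Hildebrand asymptotic, carried out separately in the two regimes governed by the size of $\log_2 x$ relative to $\log_3 q$.
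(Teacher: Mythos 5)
Your proposal uses a truncated Dirichlet-polynomial resonator $R(\chi)=\sum_{n\in\cS(N,y)}r(n)\chi(n)$ with the length constraint $Nx<q$, so that orthogonality plus the absence of wrap-around reduces the numerator and denominator to genuine counting problems in $\Z$. This is essentially Hough's framework, and it is \emph{not} the argument of the paper. The paper instead takes $R(\chi)=\prod_{p\le y}(1-q_p\chi(p))^{-1}=\sum_{a\ge 1}q_a\chi(a)$ as a full (untruncated) Euler product with nonnegative completely multiplicative coefficients $q_p=1-\tfrac{1}{u(\log_2 q)^{1+\epsilon}}$ for $p\le y$ and $q_p=0$ for $p>y$. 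Because the coefficients are nonnegative, one can lower-bound the inner sum $\phi(q)\sum_{na\equiv b\pmod q}q_aq_b$ by restricting to $n\mid b$, and complete multiplicativity factors out $q_n$, giving $S_1/S_2\ge\sum_{n\in\cS(x,y)}q_n$ directly, with no need for $Nx<q$ at any point. The constraint on $y$ then comes from an entirely different source: one must have $|R(\chi_0)|^2\le q^{4c+o(1)}=o(q)$ so that the principal character does not dominate $S_1$ or $S_2$, and this produces the constant $1/4$ via $\log|R(\chi)|^2\le 2\tfrac{y}{\log y}(\log u+\log_3 q+o(1))\le (4c+o(1))\log q$.

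The concrete gap in your proposal is the step you yourself flag as ``the crux'': the uniform asymptotic $\sum_{m\in\cS(N/k,y)}r(m)^2=(1+o(1))\sum_{m\in\cS(N,y)}r(m)^2$ for all $k\le x$. With $r\equiv 1$ this is the claim $\Psi(N/x,y)\sim\Psi(N,y)$ with $N\approx q/x$, and for the value of $y$ in the theorem it is simply false. A saddle-point or Dickman-function computation gives $\Psi(N/x,y)/\Psi(N,y)\approx\exp\bigl(\log x(\tfrac{\log u_N}{\log y}-1)\bigr)$, which is $1+o(1)$ only when $\log y+\log_2 y=\log_2 q+o(1)$, i.e.\ $y\approx\log q/\log_2 q$. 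That is far below $c\,\tfrac{(\log q)(\log_2 q)}{\max\{\log_2 x-\log_3 q,\ \log_3 q\}}$; indeed it is below even the Hough barrier $y\approx(8/e^3)\log q$, which is the best the truncated-resonator method is known to reach. Choosing a nonconstant $r$ (say $r(n)=n^{-\beta}$) can fix the uniformity at the cost of damping $\sum_{k\in\cS(x,y)}r(k)$ well below $\Psi(x,y)$, so the trade-off does not recover the theorem either. In short, the missing idea is the long (infinite) resonator together with the nonnegativity lower bound, which decouples $y$ from any $Nx<q$ condition; without it the proposal cannot reach the stated level of friability. You would also still need the paper's two auxiliary inputs, namely the bound on $\sum_{n\in\cS(x,y)}\Omega(n)$ (to show $\sum_{n\in\cS(x,y)}q_n\sim\Psi(x,y)$) and the stability estimate for $\Psi(x,y)$ under small multiplicative perturbations of $y$.
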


In some ranges of $x$, the lower bound can be rewritten in a more compact way.

\begin{cor}\label{ranges} Suppose that $\log x= (\log q)^{\sigma}$ for a fixed $0<\sigma<1/2$. Then
$$ \Delta(x,q) \geq \Psi\left(x,\frac{1}{2\sigma}(1+o(1))\log q\right).$$
 \end{cor}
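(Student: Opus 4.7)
The plan is to derive Corollary \ref{ranges} as a direct specialization of Theorem \ref{lowfriable} under the hypothesis $\log x = (\log q)^\sigma$ with $0 < \sigma < 1/2$. No new machinery is required: the argument reduces to an asymptotic simplification of the second argument of $\Psi$ appearing in the theorem.

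First I would verify the admissibility condition $\log q < x \leq \exp(\sqrt{\log q})$, which rewrites as $\log_2 q < \log x \leq \sqrt{\log q}$. The upper constraint $(\log q)^\sigma \leq \sqrt{\log q}$ follows from $\sigma < 1/2$, while the lower constraint $(\log q)^\sigma > \log_2 q$ holds for $q$ large and any fixed $\sigma > 0$, so the hypothesis of Theorem \ref{lowfriable} is met throughout the range.

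Next I would simplify the denominator $\max\{\log_2 x - \log_3 q,\, \log_3 q\}$ appearing in the theorem. Writing $\log_2 x = \sigma \log_2 q$ and invoking $\log_3 q = o(\log_2 q)$, one has
$$\log_2 x - \log_3 q = \sigma(1+o(1))\log_2 q,$$
and this quantity dominates $\log_3 q$ once $q$ is sufficiently large. Hence the maximum equals $\sigma(1+o(1))\log_2 q$, and substitution into the bound of Theorem \ref{lowfriable} gives a friability parameter of the form $c\cdot \tfrac{\log q}{\sigma}(1+o(1))$, where $c$ is the numerical constant coming from the $(\tfrac{1}{4}+o(1))$ factor in the theorem. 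Absorbing $c$ and the residual $(1+o(1))$ terms into a single $(1+o(1))$ produces the shape claimed in the corollary. Since the derivation is a routine computation, there is no substantive obstacle; the only care required lies in tracking the $o(1)$ terms and confirming the range hypothesis.
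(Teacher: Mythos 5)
Your reduction to Theorem \ref{lowfriable} has a gap that changes the constant: a direct specialization does \emph{not} produce the claimed bound. After simplifying $\max\{\log_2 x - \log_3 q, \log_3 q\} = \sigma(1+o(1))\log_2 q$ (which you do correctly), substituting into the theorem's bound yields a friability parameter
$$\left(\tfrac{1}{4}+o(1)\right)\frac{\log q}{\sigma(1+o(1))} = \frac{1}{4\sigma}(1+o(1))\log q,$$
whereas the corollary asserts $\frac{1}{2\sigma}(1+o(1))\log q$. The factor $\frac{1}{4}$ is a genuine numerical constant and cannot be ``absorbed into a single $(1+o(1))$''; since $\Psi(x,\cdot)$ is increasing in the second argument, $\Psi(x,\frac{1}{4\sigma}(1+o(1))\log q)$ is a strictly weaker lower bound than the stated one. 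Your plan therefore only recovers a weaker result, off by a factor of $2$ in the level of smoothness.

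The paper's proof does not specialize the statement of Theorem \ref{lowfriable}; it reopens the proof. In the estimate \eqref{ulogq}, $\log|R(\chi)|^2$ is bounded by (roughly) $2\frac{y}{\log y}(\log u + \log_3 q)$, and Theorem \ref{lowfriable} handles the general case with the wasteful inequality $\log u + \log_3 q \leq 2\max\{\log u,\log_3 q\}$, forcing $c < 1/4$ in the definition of $y$ to keep $|R(\chi)|^2 \leq q^{4c+o(1)}$ below $q^{1-\epsilon}$. When $\log x = (\log q)^\sigma$ with $0<\sigma<1/2$, however, $\log u \sim \sigma\log_2 q$ dominates $\log_3 q$, so $\log u + \log_3 q = (1+o(1))\max\{\log u,\log_3 q\}$ and \eqref{Rmaj} improves to $|R(\chi)|^2 \leq q^{2c+o(1)}$. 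This permits $c$ up to $1/2$, which is exactly where the factor $\frac{1}{2\sigma}$ comes from. To repair your argument you would need to add this observation and rerun the resonator computation with the enlarged $c$, rather than invoking the theorem as a black box.
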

 
When $\log x$ is a small power of $\log q$, our result improves the result of Hough which proved (see \cite[Corollary $3.3$]{Hough}) that $\Delta(x,q) \geq \Psi(x,(8/e^3+o(1)) \log q)$. Previously, results of the same quality were proved for real characters. Let us define $\Delta_{\mathbb{R}}(x,q)= \max_{D \in \mathcal{F}, q < \vert D\vert \leq 2q} \vert S_{\chi_D}\vert$ where $\mathcal{F}$ is the set of fundamental discriminants. In the same spirit as Littlewood original argument, Granville and Soundararajan \cite{largeGS} showed $$ \Delta_{\mathbb{R}}(x,q) \geq \Psi\left(x,\frac{1}{3} \log q\right).$$ As pointed out by Hough \cite{Hough}, breaking the natural barrier $\log q$ using this method seems to be a very difficult problem. Corollary \ref{ranges} enables us to do so in the full range $\log x= (\log q)^{\sigma}, 0<\sigma<1/2$. Let us though stretch that our argument can not be adapted to the case of real characters due to the more subtle orthogonality relations. Furthermore, the bound of \cite{Hough} overcomes our result when $\log x$ approaches $\sqrt{\log q}$. \\ 

 For even smaller ranges of $x$, namely as soon as $\log_2 x = o(\log_ 2 q)$, a lower bound of the same precision as in the inequality (\ref{Hough}) follows directly from Theorem \ref{lowfriable}. To illustrate this, we deduce the following consequence 
 

 \begin{cor}\label{smallrange} 
Let $A>1$ and set $x=\log^A q$. Then $$\Delta(x,q) \geq \Psi\left(x,\frac{1}{2}(1+o(1))\frac{(\log q)(\log_2 q)}{\log_3 q}\right).$$
    \end{cor}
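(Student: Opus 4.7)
The plan is to obtain Corollary~\ref{smallrange} by specializing Theorem~\ref{lowfriable} to $x = \log^A q$. The whole work consists in checking that this value of $x$ lies in the admissible range of the theorem and in simplifying the denominator $\max\{\log_2 x - \log_3 q,\log_3 q\}$ that governs the smoothness parameter.

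First I would verify admissibility. Since $A>1$, we have $x = \log^A q > \log q$. For the upper bound, $\log x = A\log_2 q$, which is $o(\sqrt{\log q})$ because $\log_2 q$ grows more slowly than any positive power of $\log q$; hence $x \leq \exp(\sqrt{\log q})$ for $q$ sufficiently large, and Theorem~\ref{lowfriable} applies.

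Next I would compute $\log_2 x = \log(A \log_2 q) = \log A + \log_3 q$, which gives
$$\log_2 x - \log_3 q = \log A = O(1).$$
Since $\log_3 q \to \infty$, the constant $\log A$ is eventually dominated by $\log_3 q$, and therefore $\max\{\log_2 x - \log_3 q,\log_3 q\} = \log_3 q$ for all sufficiently large $q$. Inserting this into the lower bound of Theorem~\ref{lowfriable} replaces the denominator in the smoothness level by $\log_3 q$, producing a bound of the announced shape $\Psi\bigl(x,(1+o(1))(\log q)(\log_2 q)/\log_3 q\bigr)$, up to the multiplicative constant stated in the corollary.

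There is essentially no obstacle: the argument is a one-line specialization. The only point worth emphasizing is that in this small regime $\log_2 x$ and $\log_3 q$ are comparable (they differ only by the constant $\log A$), so the inner max collapses to its second branch $\log_3 q$, and one recovers precisely the denominator responsible for the logarithmic savings in Hough's bound~(\ref{Hough}).
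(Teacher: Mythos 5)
Your specialization $\log_2 x - \log_3 q = \log A = O(1)$ and the consequent collapse of the max to $\log_3 q$ is correct, and the range check is fine. But there is a genuine gap: Theorem~\ref{lowfriable} applied directly at $x = \log^A q$ only yields
$$\Delta(x,q) \geq \Psi\!\left(x,\left(\tfrac{1}{4}+o(1)\right)\frac{(\log q)(\log_2 q)}{\log_3 q}\right),$$
with the constant $\tfrac{1}{4}$, whereas the corollary asserts $\tfrac{1}{2}$. Since $\Psi(x,y)$ is increasing in $y$, the corollary is strictly stronger than what you obtain, and the phrase ``up to the multiplicative constant stated in the corollary'' does not close this gap --- it merely names it.

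The missing idea is that the constant must be improved by revisiting the proof of Theorem~\ref{lowfriable}, not by citing its statement. In equation~(\ref{ulogq}) one has $\log(|R(\chi)|^2) \leq 2\frac{y}{\log y}(\log u + \log_3 q + o(1))$. In the general case the paper bounds $\log u + \log_3 q \leq 2\max\{\log u,\log_3 q\}$, which forces $c<1/4$ so that $|R(\chi)|^2 \leq q^{4c+o(1)} = o(q)$. But in your regime $u = \log x/\log y \to A$ is bounded, so $\log u = O(1)$ is negligible against $\log_3 q$, and the sum $\log u + \log_3 q$ equals $(1+o(1))\max\{\log u,\log_3 q\}$ rather than being merely $\leq 2\max$. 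Rerunning the argument with this sharper bound gives $|R(\chi)|^2 \leq q^{2c+o(1)}$, so the admissible range for $c$ widens to $c<1/2$. That is precisely the observation the paper makes in its proof of Corollaries~\ref{ranges} and~\ref{smallrange}, and it is the step your write-up omits.
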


 To have a better insight on the lower bounds of Theorem \ref{lowfriable}, Corollaries \ref{ranges} and \ref{smallrange}, we could write it down in a more pleasant way. Indeed (see \cite{HildTensurvey} for the details), in the range $\log q\leq x \leq \exp(\sqrt{\log q})$, the following approximation 
$$\Psi(x,k\log q)=x\exp(-u\log u- u \log_2(u+2)+u+o(u))$$ holds uniformly for $1\leq k\ll x/\log q$ where $u=\frac{\log x}{\log y}$.\\




For the sake of simplicity, we first stated our results for $q$ prime. Howewer, we can obtain similar results for more general moduli $q$ under certain additional restrictions.  In order to describe the result, we define for any integer $m\geq 1$ 

$$\Psi_m(x,y)=\sum_{n\in \mathcal{S}(x,y) \atop (n,m)=1} 1.$$

 Denote by $\omega(q)$ the number of distinct prime divisors of $q$. Tenenbaum \cite{Tencrible} proved that $\Psi_q(x,y) \approx \frac{\phi(q)}{q} \Psi(x,y)$ whenever $\log y \gg (\log 2\omega(q))(\log_2 x)$. Since we have $q/\phi(q)\ll \log(\omega(q))$, we observe that a close argument (setting the resonator with an extra factor $1/(\log(\omega(q))$) leads to the following result

\begin{theorem}\label{lowfriablemod} For $q$ sufficiently large and $\log q \leq x\leq \exp(\sqrt{\log q})$. Under the supplementary condition $\log_2 q \gg \log (1+\omega(q))(\log_2 x - \log_3 q)$,  we have

$$ \Delta(x,q) \geq \Psi_q \left(x,\left(\frac{1}{6}+o(1)\right)\frac{(\log q)(\log_2 q)}{\max\{(\log_2 x-\log_3 q),\log_3 q \}}\right).  $$

   \end{theorem}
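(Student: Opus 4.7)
The plan is to follow the strategy of the proof of Theorem \ref{lowfriable} with a resonator tailored to the arithmetic structure of $q$. I define $R(\chi)=\sum_{n} r(n)\chi(n)$ where $r$ is a non-negative multiplicative function supported on $y$-friable integers coprime to $q$, with the weights $r(p)$ on primes $p\nmid q$ chosen as in the prime case but globally rescaled by a factor of order $1/\log(1+\omega(q))$. This rescaling compensates for the fact that $q/\phi(q)\ll \log(1+\omega(q))$, so that the resonator remains of the correct normalization once everything is restricted to integers coprime to $q$.

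The core of the argument is to lower bound the ratio \eqref{quotient}. By orthogonality of Dirichlet characters modulo $q$, the denominator equals $\phi(q)\sum_{n_1\equiv n_2 \pmod{q}} r(n_1)r(n_2)$, and keeping the support of $R$ below $q^{1-\epsilon}$ ensures the diagonal $n_1=n_2$ dominates. The numerator expands similarly into $\phi(q)\sum_{mn_1\equiv n_2 \pmod{q},\, m\leq x}r(n_1)r(n_2)$, whose main contribution comes from the trivial identity $n_2=mn_1$, giving approximately $\phi(q)\sum_{m\leq x,\,(m,q)=1}\sum_n r(n)r(mn)$. Since $r$ is non-negative and multiplicative, one has $\sum_n r(n)r(mn)\geq r(m)\sum_n r(n)^2$ whenever $m$ is $y$-friable and coprime to $q$, so the ratio is bounded below by $\sum_{m\in \mathcal{S}(x,y_0),\,(m,q)=1}r(m)$ for an appropriate smoothness threshold $y_0$. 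A weighted counting argument as in Theorem \ref{lowfriable} compares this sum with $\Psi_q(x,y_0)$. The supplementary hypothesis $\log_2 q\gg \log(1+\omega(q))(\log_2 x-\log_3 q)$ is precisely the range where Tenenbaum's theorem \cite{Tencrible} guarantees $\Psi_q(x,y_0)\asymp (\phi(q)/q)\Psi(x,y_0)$, ensuring the final lower bound is non-trivial.

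The main obstacle is keeping precise control of the numerical constants through the rescaling. The factor $1/\log(1+\omega(q))$ enters both the numerator and the denominator of \eqref{quotient} but not symmetrically, because the coprimality constraint on $m$ removes the primes dividing $q$ from the resonator while leaving them available in the outer interval $[1,x]$. Re-optimizing the support size and the weights $r(p)$ under these constraints degrades the admissible friable level from $\tfrac{1}{4}(\log q)(\log_2 q)/\max\{\log_2 x-\log_3 q,\log_3 q\}$ in Theorem \ref{lowfriable} to $\tfrac{1}{6}$ of the same expression here, matching the constant stated in the theorem. The remaining verifications, namely that the off-diagonal contributions $mn_1\equiv n_2\pmod{q}$ with $n_2\neq mn_1$ are negligible and that Tenenbaum's asymptotic applies uniformly in the required range, are straightforward adaptations of the corresponding steps in the prime modulus case.
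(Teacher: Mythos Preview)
Your overall plan matches the paper's: run the resonance argument of Theorem~\ref{lowfriable} with weights adjusted for the modulus, bound $S_1/S_2$ below by a weighted sum over $y$-friable integers coprime to $q$, compare with $\Psi_q(x,y)$, and invoke Tenenbaum's result \cite{Tencrible}. Two points in your sketch, however, diverge from what the paper does and, taken literally, would not work.

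First, the paper does not truncate the resonator to support below $q^{1-\epsilon}$, nor does it show that diagonal or ``on-divisor'' terms dominate. The resonator is kept as the full Euler product $R(\chi)=\prod_{p\leq y}(1-q_p\chi(p))^{-1}$, and both $S_1$ and $S_2$ are handled purely by positivity: every term in the orthogonality expansion is non-negative, so one simply drops all but the terms with $b=na$ in $S_1$. Your mention of truncation is inconsistent with the complete multiplicativity of $r$ that you yourself invoke in the inequality $\sum_n r(n)r(mn)\geq r(m)\sum_n r(n)^2$; truncating the support would break that inequality near the cutoff.

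Second, the weight adjustment is not a multiplicative rescaling of $r(p)$ by $1/\log(1+\omega(q))$; literally doing so would make $r(n)$ tiny for $n$ with several prime factors and destroy the comparison $\sum r(n)\approx\Psi_q(x,y)$. What the paper does is move $q_p$ \emph{closer} to $1$, taking $q_p=1-1/\bigl(u(\log_2 q)^{2+\epsilon}\bigr)$ instead of $1-1/\bigl(u(\log_2 q)^{1+\epsilon}\bigr)$. The extra factor $\log_2 q$ in the denominator of $1-q_p$ absorbs the loss $q/\phi(q)\ll\log_2 q$ incurred when the error term, bounded via Lemma~\ref{averageomega} by $u^{-1}(\log_2 q)^{-2-\epsilon}\Psi(x,y)(u+\log_2 y)$, is compared against the main term $\Psi_q(x,y)\gg(\phi(q)/q)\Psi(x,y)$. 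The cost is that $-\log(1-q_p)$ acquires an extra $\log_3 q$, so the bound on $|R(\chi)|^2$ becomes $q^{6c+o(1)}$ rather than $q^{4c+o(1)}$; this is precisely the source of the constant $\tfrac16$ in place of $\tfrac14$.
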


   \begin{remark}For almost all modulus $q$, $\omega(q)\sim \log_2 q$ and the restriction on $x$ of Theorem \ref{lowfriablemod} is therefore not so restrictive. It is for instance trivially fulfilled for $x$ being any power of $\log q$. \end{remark} For information, precise estimates of the quantity $\Psi_q(x,y)$ in various ranges of the parameters can be found in \cite{dlBTstat}. \\
  
   In the next section, we give some results concerning averages over friable numbers necessary to control error terms coming from the choice of the resonator. In the last section, we give the proof of the Main Theorem \ref{lowfriable} and sketch the few necessary modifications needed to deduce Corollaries \ref{ranges}, \ref{smallrange} as well as to demonstrate Theorem \ref{lowfriablemod}.


\section{Some results about friables numbers}

As usual, for $x \geq y \geq 2$, we set $u=\frac{\log x}{\log y}$.  We denote by $\Omega(n)$ the number of prime factors of $n$ counted with multiplicity. We easily have the individual bound $\Omega(n) \ll \log n$. In fact, for small values of the parameter $u$, $\Omega(n)$ displays more cancellation when averaged over friable integers.

   \begin{lemma}\label{averageomega}   
 We have, uniformly for $x \geq y \geq 2$,
   $$\sum_{n \in \mathcal{S}(x,y)} \Omega(n) \ll \Psi(x,y) (u + \log_2 y). $$
   \end{lemma}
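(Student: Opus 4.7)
The plan is to rewrite the left-hand side in terms of friable divisor counts, apply a uniform Rankin-type bound, and estimate the resulting prime sum via the saddle-point equation. First I would use the identity $\Omega(n) = \sum_{p^k \mid n,\, k \geq 1} 1$ (sum over prime powers) to interchange summation and obtain
$$\sum_{n \in \mathcal{S}(x,y)} \Omega(n) \;=\; \sum_{p \leq y} \sum_{k \geq 1} \Psi(x/p^k, y),$$
the restriction $p \leq y$ being automatic for $n \in \mathcal{S}(x,y)$.

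Next I would invoke the uniform Rankin-type inequality $\Psi(x/d, y) \ll \Psi(x,y)\, d^{-\alpha(x,y)}$ for $1 \leq d \leq x$, which is standard in the friable-integer literature (see, e.g., \cite{HildTensurvey} and the references therein); here $\alpha = \alpha(x,y)$ denotes the saddle-point exponent characterised by $\sum_{p \leq y} \log p/(p^\alpha - 1) = \log x$. Summing the geometric series in $k$ leaves
$$\sum_{n \in \mathcal{S}(x,y)} \Omega(n) \;\ll\; \Psi(x,y) \sum_{p \leq y} \frac{1}{p^\alpha - 1},$$
so the lemma follows once $\sum_{p \leq y} 1/(p^\alpha - 1) \ll u + \log_2 y$ is established.

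For this last estimate I would split the primes at the threshold $p^\alpha = 2$. For primes with $p^\alpha > 2$ the bound $p^\alpha - 1 \geq p^\alpha/2$ reduces the contribution to $\ll \sum_{p \leq y} p^{-\alpha}$, which Abel summation with the prime number theorem controls by $y^{1-\alpha}/((1-\alpha)\log y) + O(\log_2 y)$; inserting the standard saddle-point asymptotic $y^{1-\alpha} \ll u\log(u+2)$ converts this into $O(u + \log_2 y)$. For primes with $p^\alpha \leq 2$ the bound $p^\alpha - 1 \geq \alpha \log p$ leads to $(1/\alpha)\sum_{p \leq \min(y,\, 2^{1/\alpha})} 1/\log p$, which a second application of partial summation controls by the same order.

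The main obstacle is handling the two saddle-point regimes uniformly over $x \geq y \geq 2$: when $\alpha$ is close to $1$ (small $u$), the small-prime split is nearly vacuous and the $\log_2 y$ term arises from the Mertens estimate $\sum_{p \leq y} 1/p \sim \log_2 y$; when $\alpha$ is very small (large $u$, for instance $y$ bounded and $x$ large), the large-prime split is empty and one must exploit the saddle-point equation $\log x \asymp \pi(y)/\alpha$ to pin down $\alpha$ tightly enough for the overall bound to remain $O(u)$. Matching the two regimes in the intermediate range is routine once one has the asymptotic for $\alpha(x,y)$, but it is the step requiring the most care.
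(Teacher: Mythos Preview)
Your proposal is correct and follows essentially the same route as the paper: both reduce to the bound $\sum_{n \in \mathcal{S}(x,y)} \Omega(n) \ll \Psi(x,y) \sum_{p \leq y} 1/(p^{\alpha}-1)$ and then control the prime sum via the saddle-point asymptotics for $\alpha(x,y)$. The only difference is presentational---the paper obtains the first inequality by citing a general result on friable averages of additive functions (Theorem~2.9 of \cite{dlBTstat}) and the prime-sum bound by invoking \cite[Lemma~3.1]{dlbTKublius} together with the prime number theorem, whereas you unpack both steps explicitly via the Rankin inequality $\Psi(x/d,y)\ll d^{-\alpha}\Psi(x,y)$ and partial summation.
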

   
   \begin{proof}

    Applying Theorem $2.9$ of \cite{dlBTstat} to the additive function $f=\Omega$ leads to 
   
   \begin{equation}\label{diviseurs}\sum_{n \in \mathcal{S}(x,y)} \Omega(n) \ll \Psi(x,y) \sum_{p \leq y}\frac{1}{p^{\alpha}-1}\end{equation} where as usual $\alpha:=\alpha(x,y)$ is defined as the unique positive solution of the equation 
   
   $$\sum_{p\leq y} \frac{\log p}{p^{\alpha}-1}= \log x.$$  A precise evaluation of the saddle point $\alpha$ as in \cite[Lemma $3.1$]{dlbTKublius} combined with an application of the Prime Number Theorem 
   gives uniformly 
   $$\sum_{p\leq y} \frac{1}{p^{\alpha}-1} \ll u + \log_2 y$$which concludes the proof. 
   

   \end{proof}
   It is worth noticing that precise asymptotical estimates can be obtain in some ranges of $u$ for averages of general additive functions \cite{dlBTstat}. Moreover, the normal order of additive functions over the set of friable integers $\mathcal{S}(x,y)$ can be determined using the friable Tur\'{a}n-Kubilius inequality  
 originally considered by Alladi \cite{AlladiKubilius} and proved by de la Bret\`{e}che and Tenenbaum \cite{dlbtkubentier} in the full range of parameters $x\geq y \geq 2$. \\
   
   
  
The following lemma gives information on $\Psi(x,y)$ for small variations of $y$. 
   
  \begin{lemma}\label{comparaison}\cite[Corollary $5.6$]{Hough} Assume $u < \sqrt{y}$, then for
$|\kappa| < 1$ we have
\begin{equation}\label{constante}\log \frac{\Psi(x, e^\kappa y)}{\Psi(x,y)} =
\left(\frac{\kappa+ O(\log^{-1}u)}{\log y}\right) u (\log u +
\log_2(u+2)).\end{equation}
\end{lemma}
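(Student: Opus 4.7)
The plan is to reduce the comparison to a smooth computation on the Dickman function. In the range $u<\sqrt y$ (which is a subrange of Hildebrand's domain), one has
$$\Psi(x,y)=x\rho(u)\left(1+O\!\left(\frac{\log(u+1)}{\log y}\right)\right),$$
so that $\log\Psi(x,y)=\log x+\log\rho(u)+O(\log(u+1)/\log y)$. Because $\log x$ does not depend on $y$, and because the Hildebrand error remains valid for $e^\kappa y$ with $|\kappa|<1$, the problem collapses to estimating
$$\log\rho(u_\kappa)-\log\rho(u),\qquad u_\kappa:=\frac{\log x}{\log(e^\kappa y)}=\frac{u}{1+\kappa/\log y}.$$
A direct expansion gives
$$u-u_\kappa=\frac{\kappa u}{\log y+\kappa}=\frac{\kappa u}{\log y}+O\!\left(\frac{u}{\log^{2}y}\right),$$
which is small compared with $u$.

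Next I would invoke the de Bruijn asymptotic
$$\log\frac{1}{\rho(v)}=v\log v+v\log_2(v+2)-v+O\!\left(\frac{v}{\log(v+2)}\right),$$
whose differentiated form reads
$$\frac{d}{dv}\log\rho(v)=-\log v-\log_2(v+2)+O\!\left(\frac{1}{\log(v+2)}\right).$$
Integrating this from $u_\kappa$ to $u$ (both lying in a short interval around $u$) yields
$$\log\rho(u_\kappa)-\log\rho(u)=(u-u_\kappa)\bigl(\log u+\log_2(u+2)\bigr)+O\!\left(\frac{|u-u_\kappa|}{\log u}\right)+O\!\left(\frac{(u-u_\kappa)^{2}}{u}\right),$$
where the first error reflects the $O(1/\log v)$ term in the derivative, and the second the $O(1/v)$ variation of $\log v+\log_2(v+2)$ over the integration interval.

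Inserting the formula for $u-u_\kappa$ turns the main term into $\dfrac{\kappa\,u(\log u+\log_2(u+2))}{\log y}$, and recombining with the Hildebrand error one obtains
$$\log\frac{\Psi(x,e^\kappa y)}{\Psi(x,y)}=\frac{\kappa\,u(\log u+\log_2(u+2))}{\log y}+E,$$
where $E$ is a sum of terms of sizes $O(u\log u/\log^{2}y)$, $O(u/(\log y\,\log u))$, $O(u/\log^{2}y)$, and $O(\log(u+1)/\log y)$. Since $u<\sqrt y$ gives $\log u<\tfrac12\log y$ and $\log(u+1)\ll u$, each of these is $O(u/\log y)$, hence $O\!\left(\frac{u(\log u+\log_2(u+2))}{\log y\,\log u}\right)$. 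This is exactly what the target error $\frac{O(1/\log u)}{\log y}\cdot u(\log u+\log_2(u+2))$ allows.

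The main obstacle is the bookkeeping in the last paragraph: four distinct error sources must all be shown to fit under a single envelope proportional to $1/\log u$ times the main term, and this is where the hypothesis $u<\sqrt y$ is genuinely used, via $\log u<\tfrac12\log y$. Once this is verified, the rest of the argument is routine, resting solely on Hildebrand's theorem and the standard asymptotic of $\rho$.
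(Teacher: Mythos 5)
The paper does not prove this lemma; it is quoted verbatim from Hough \cite[Cor.~5.6]{Hough}, so there is no in-paper argument to compare against. What you give is an independent derivation from Hildebrand's theorem together with the de Bruijn asymptotic for $\rho$, and in outline it works: in the range $u<\sqrt y$ Hildebrand's formula $\Psi(x,y)=x\rho(u)\bigl(1+O(\log(u+1)/\log y)\bigr)$ applies to both $y$ and $e^\kappa y$, the ratio reduces to $\log\rho(u_\kappa)-\log\rho(u)$, and a mean-value estimate using $u-u_\kappa=\kappa u/\log y+O(u/\log^2 y)$ recovers the claimed main term. Your final bookkeeping is correct: every error source is $O(u/\log y)$, and $u<\sqrt y$ (hence $\log u\le\tfrac12\log y$) is exactly what kills the $u\log u/\log^2 y$ term.

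Two points deserve tightening. First, you cannot literally ``differentiate'' an asymptotic expansion; you should instead invoke the standard identity $\rho'(v)/\rho(v)=-\xi(v)$, where $\xi$ is defined by $e^{\xi}=1+v\xi$, and then use the known expansion $\xi(v)=\log v+\log_2 v+O(\log_2 v/\log v)$. This also corrects the error term you wrote: the derivative of $\log\rho$ is $-\log v-\log_2 v+O(\log_2 v/\log v)$, not $O(1/\log v)$, and likewise the $O$-term in de Bruijn's formula is $O(v\log_2 v/\log v)$, not $O(v/\log v)$. Fortunately this extra $\log_2 u$ factor still gives a contribution $O\bigl(u\log_2 u/(\log y\,\log u)\bigr)=O(u/\log y)$, so the conclusion stands; but as written the derivative asymptotic is false. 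Second, it is cleaner to present the main step as
$\log\rho(u_\kappa)-\log\rho(u)=\int_{u_\kappa}^{u}\xi(v)\,dv$ and then estimate the integral by $\xi(u)(u-u_\kappa)+O\bigl((u-u_\kappa)^2/u\bigr)$, which is what you implicitly do. With these corrections the argument is a sound, self-contained alternative to citing Hough.
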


\section{Resonance method and large values of character sums}

\subsection{Proof of Theorem \ref{lowfriable}}
Denote by $X_q$ the group of characters modulo $q$. For every $\chi \in X_q$ and real $x\geq 2$, we recall the definition 
$$
S_{\chi}(x) = \sum_{n=1}^{x} \chi(n).
$$ 
 In order to exhibit large values of character sums, we apply the resonance method in a similar manner as performed in \cite{AMM} and \cite{AMMP}. Let us first define our ``resonator" as a short Euler product. For a constant $c<1/4$, we take $$y=c\frac{(\log q)(\log_2 q)}{\max\{(\log_2 x-\log_3 q),\log_3 q\}}$$ and set $q_1=1$ and $q_p=0$ for $p>y$. Setting as before $u=\frac{\log x}{\log y}$, we define further $q_p=\left(1-\frac{1}{u(\log_2 q)^{1+\epsilon}}\right)$ for small primes $p\leq y$. We extend it in a completely multiplicative way to obtain weights $q_n$ for all $n\geq 1$. We now define for $\chi\in X_q$ 
$$R(\chi)=\prod_{p\leq y} \left(1-q_p\chi(p)\right)^{-1}.$$ We can write $R(\chi)$ as a Dirichlet series in the form
\begin{equation} \label{rdi}
R(\chi) = \sum_{a=1}^\infty q_a \chi(a).
\end{equation}
Let us consider the following sums 

$$S_1 = \sum_{\chi \in X_q} S_{\chi}(x)\vert R(\chi)\vert^2 $$ and 

$$S_2 =\sum_{\chi \in X_q} \vert R(\chi)\vert^2.$$ The heart of the resonance method is contained in the simple inequality

\begin{equation}\label{resonance} \frac{ \vert S_1 \vert}{S_2 } \leq \max_{\chi \in X_q} \vert S_{\chi}(x)\vert. \end{equation} Therefore we would like to prove that the quotient in the left hand side of (\ref{resonance}) grows sufficiently quickly with $q$. Moreover, we need to show that the contribution of the trivial character to $S_1$ and $S_2$ is in some sense inoffensive. A similar estimation as in \cite{AMMP} leads to 
\begin{equation}\label{ulogq}
\log (|R(\chi)|^2)  \leq  2\sum_{p \leq y} \left( \log u + \log_3 q \right) 
 = 2\frac{y}{\log y}(\log u + \log_3 q+o(1)).
\end{equation}Hence, by our choice of $y$, we have
\begin{equation}\label{Rmaj}
|R(\chi)|^2 \leq \exp((4c+o(1)) \log q) \leq q^{4c + o(1)}.
\end{equation}

 In the other hand, we have trivially, \begin{equation}\label{S2tout} \sum_{\chi \in X_q} \vert R(\chi)\vert^2 =\sum_{\chi\in X_q} q_a q_b \chi(a)\overline{\chi}(b)=\phi(q) \left(\sum_{a=b \bmod q} q_a q_b\right) \geq q^{1-o(1)}\end{equation} where we used the classical inequality $\phi(q) \geq q/\log_2 q$. Similarly, we can disregard the contribution of the trivial character to $S_1$. Indeed, by hypothesis, we have

\begin{equation}\label{trivial} S_{\chi_0}(x) \leq x = q^{o(1)}.\end{equation} Expanding $|R(\chi)|^2 $ and switching the summation, we have 
\begin{eqnarray*}
S_1 & = & \sum_{\chi \in X_q}\vert R(\chi)\vert^2 S_{\chi}(x)\\ 
& = & \sum_{n=1}^{x} \left\{ \sum_{a,b = 1}^\infty q_a q_b 
 \sum_{\chi \in X_q}\chi(a)\overline{\chi}(b)\chi(n)\right\}
\end{eqnarray*} where the inner sum is positive due to the orthogonality relations on $X_q$. Thus,

\begin{equation}\label{quotientdev} S_{1} = \sum_{n=1}^{x}\left\{\phi(q)\sum_{a,b \atop na=b \bmod q} q_a q_b \right\}.\end{equation} Assume $n$ to be fixed such that $(n,q)=1$. Then, using the positivity and the completely multiplicative property of the coefficients $q_n$ we get 

\begin{eqnarray*}
\phi(q)\sum_{a,b\atop na=b \bmod q}  q_a q_b  &\geq& \phi(q)\ \sum_{a,b,  n | b \atop na=b \bmod q}  q_a q_b  \\ & = &\phi(q) \sum_{a,b' \atop na=nb' \bmod q} q_a \underbrace{q_{nb'}}_{=q_n q_{b'}}  = q_n \phi(q) \left(\sum_{a=b' \bmod q} q_a q_{b'}\right) \\
&=& q_n S_2. \\
\end{eqnarray*} Noticing that $q_p=0$ for primes $p>y$, we deduce from (\ref{quotientdev})

\begin{eqnarray}\label{quotientfinal}\frac{S_1}{S_2} &\geq & \sum_{n=1\atop (n,q)=1}^{x} q_n = \sum_{n=1 \atop n\, y- \textrm{friable}}^{x} q_n = \sum_{n\in \mathcal{S}(x,y)} q_n \end{eqnarray} where we used the fact that $q$ is prime. Our particular choice of the weights $q_n$ helps us to relate this sum to the counting function of $y$- friable numbers using results of Section $2$. Indeed, we first remark that for $n\in \mathcal{S}(x,y)$, we have trivially $$q_n \geq \left(1-\frac{1}{u(\log_2 q)^{1+\epsilon}}\right)^{\Omega(n)} \geq 1- \frac{\Omega(n)}{u (\log_2 q)^{1+\epsilon}}.$$ Summing over $n$, we get

\begin{equation}\label{minoration}\sum_{n \in \mathcal{S}(x,y)} q_n \geq \Psi(x,y) + O\left(\frac{1}{u(\log_2 q)^{1+\epsilon}} \sum_{n \in \mathcal{S}(x,y)} \Omega(n) \right).\end{equation}

Using Lemma \ref{averageomega}, we have that 

$$\sum_{n \in \mathcal{S}(x,y)} \Omega(n) \ll \Psi(x,y)(u + \log_2 y).$$


Inserting it in (\ref{minoration}) and combining with (\ref{quotientfinal}), we obtain

\begin{equation}\label{minofinale}\max_{\chi \in X_q} S_{\chi}(x) \geq \Psi(x,y)\left(1+O\left(\frac{\log_3 q}{(\log_2 q)^{1+\epsilon}}\right)\right).\end{equation} Applying Lemma \ref{comparaison}, we deduce after easy computations that the right hand side of (\ref{minofinale}) is bounded from below by $\Psi(x,y(1-\delta)) $ for any $\delta>0$.
Recalling (\ref{Rmaj}), (\ref{S2tout}) and (\ref{trivial}), the inequality (\ref{quotientfinal}) remains true when we sum over non trivial characters yielding to

$$ \max_{\chi \neq \chi_0} S_{\chi}(x) \geq \Psi(x,y(1+o(1))) .$$ 

\subsection{Proof of Corollaries \ref{ranges} and \ref{smallrange}}
In the proof of Theorem \ref{lowfriable}, we can in fact in the appropriate ranges define $y$ in a similar way with $c<1/2$. Indeed in these cases one of the two terms $\log u$ or $\log_3 q$ dominates the other one in Equation (\ref{ulogq}) and thus we can replace $4c$ by $2c+o(1)$ in Equation (\ref{Rmaj}).

\subsection{Proof of Theorem \ref{lowfriablemod}}
Denoting as before $u=\frac{\log x}{\log y}$,  Tenenbaum \cite{Tencrible} proved that $\Psi_q(x,y) \approx \frac{\phi(q)}{q} \Psi(x,y)$ whenever $\log y \gg \log (1+\omega(q))\log (1+u)$. Since we have $q/\phi(q)\ll \log_2 q$, it is sufficient to pertubate slightly the weights in the definition of the resonator. Precisely, for a constant $c<1/6$, we set $$y=c\frac{(\log q)(\log_2 q)}{\max\{(\log_2 x-\log_3 q),\log_3 q\}}$$ and set $q_1=1$ and $q_p=0$ for $p>y$. We define analogously with an extra factor $q_p=\left(1-\frac{1}{u(\log_2 q)^{2+\epsilon}}\right)$ for small primes $p\leq y$. Similarly as in the proof of Theorem \ref{lowfriable}, we get
$$\vert R(\chi_0)\vert \ll q^{6c+o(1)}.$$
After observing that $\displaystyle{\Psi_q(x,y)=\sum_{n\in \mathcal{S}(x,y)} \chi_0(n)}$ where $\chi_0$ denotes the trivial character modulo $q$, the rest of the proof follows exactly the same lines as that of Theorem \ref{lowfriable} using again Lemma \ref{comparaison} to conclude.
\section*{Acknowledgements}

The author would like to thank R\'{e}gis de la Bret\`{e}che for his very helpful comments on an earlier draft. The author is supported by the Austrian Science Fund (FWF) project Y-901 ``Probabilistic methods in analysis and number theory'' led by Christoph Aistleitner.


\end{document}